\documentclass[11pt, a4paper]{article}

\usepackage[utf8]{inputenc}
\usepackage[T1]{fontenc}
\usepackage{lmodern}
\usepackage{geometry}
\usepackage{amsmath, amssymb, amsthm, mathtools}
\usepackage{tikz-cd}
\usepackage{enumitem}
\usepackage[colorlinks=true, linkcolor=blue, citecolor=red, urlcolor=blue]{hyperref}
\usepackage[style=numeric, backend=biber, giveninits=true]{biblatex}
 \usepackage{xurl}
 \usepackage{hyperref}

\theoremstyle{plain}
\newtheorem{theorem}{Theorem}[section]
\newtheorem{lemma}[theorem]{Lemma}
\newtheorem{proposition}[theorem]{Proposition}

\theoremstyle{definition}
\newtheorem{definition}[theorem]{Definition}
\newtheorem{remark}[theorem]{Remark}

\newcommand{\Mon}{\mathbf{Mon}}
\newcommand{\Hopf}{\mathbf{Hopf}}
\newcommand{\op}{\mathrm{op}}
\newcommand{\Set}{\mathbf{Set}}
\newcommand{\Grp}{\mathrm{Grp}}
\newcommand{\com}{\mathrm{com}}

\newcommand{\Mod}{\mathbf{Mod}}

\title{On the Semi-Abelianness of Affine Group Schemes}
\author{David Forsman \\ 
\small Université catholique de Louvain \\
\small \texttt{david.forsman@uclouvain.be}}
\date{24 February, 2026}

\begin{document}

\maketitle

\begin{abstract}
    We prove that the category of commutative Hopf algebras over a field $k$ is co-semi-abelian. Consequently, the category of affine group $k$-schemes is semi-abelian. We establish coregularity by identifying the orthogonal factorization system of surjections and faithfully flat injections, and we deduce coexactness from Takeuchi’s correspondence between normal Hopf ideals and Hopf subalgebras of commutative Hopf $k$-algebras.
\end{abstract}

\section{Introduction}

It is a well-established result that the category of cocommutative Hopf $k$-algebras over an arbitrary field $k$ forms a semi-abelian category \cite{GRAN20194171, VespaWambst2018}, first proven for zero characteristic fields \cite{Gran_2015}. These results have motivated a broader investigation into the categorical properties of different Hopf structures; for instance, the semi-abelian condition has also been successfully analyzed for certain categories of cocommutative color Hopf algebras \cite{SCIANDRA2024107677}. The proof that cocommutative Hopf $k$-algebras form a semi-abelian category heavily relies on Newman's bijective correspondence between the Hopf subalgebras and the bi-ideals of cocommutative Hopf $k$-algebras \cite{Newman1975}. Our approach for the dual problem relies on Takeuchi's results about injective morphisms of commutative Hopf $k$-algebras being faithfully flat and Takeuchi's correspondence, which characterizes normal Hopf-ideals syntactically among Hopf-ideals \cite{Takeuchi1972}.

Introduced by Janelidze, Márki, and Tholen, semi-abelian categories provide a unified framework for homological algebra in non-abelian settings, possessing enough structure to support key lemmas and constructions in homological algebra \cite{JANELIDZE2002367}. The established semi-abelian nature of cocommutative Hopf $k$-algebras naturally raises the dual question: what is the categorical nature of commutative Hopf algebras? Coprotomodularity of commutative Hopf algebras was noted in the talk \cite{VanderLinden2018SplitExtBialgTalk} related to the paper by García-Martínez and Van der Linden \cite{GarciaMartinezVanDerLinden2018}. Homological self-duality of commutative Hopf algebras was shown in \cite{PeschkeVanDerLinden2024}.

We answer this question by proving that the category $\Hopf_k^\com$ of commutative Hopf algebras over a field $k$ is \textbf{co-semi-abelian}, meaning its opposite category of affine group schemes over $k$ (see \cite{Milne2012AffineGroupSchemes}) is a semi-abelian category. Furthermore, since Takeuchi's results extend to the context of super Hopf algebras \cite{Masuoka2005Fundamental}, the same proof demonstrates that the category of commutative super Hopf algebras over a field $k$ ($\mathrm{char}(k)\neq 2$) also forms a locally presentable co-semi-abelian category.

The paper is structured as follows. Section 2 covers the necessary categorical and algebraic preliminaries. In Section 3, we prove the main theorem for commutative Hopf algebras over a field.

\section{Preliminaries}

We remind the reader of the definition of semi-abelian categories and note how exactness can be seen as a stability property of normal subobjects (kernels) in a homological setting.

\begin{definition}
    We say that a category $C$ is:
    \begin{enumerate}
        \item \textbf{Pointed} if it has a zero-object, an object that is both initial and terminal.
        
        \item \textbf{Regular} if it has finite limits and every morphism $f$ factors $f = me$, where $m$ is a monomorphism and $e$ is a stably extremal epimorphism.\footnote{A family $(f_i\colon c_i\to d)_{i\in I}$ of morphisms is jointly extremal epimorphic, if a factorization $f_i = me_i$, where $m$ is a fixed monomorphism for all $i\in I$, guarantees that $m$ is an isomorphism. A morphism is an extremal epimorphism if its singleton family is jointly extremal epimorphic. A stably extremal epimorphism is a morphism whose all pullbacks are extremal epimorphisms.}\footnote{Extremal epimorphisms in regular categories are regular by \cite[Proposition 2.2 (b)]{Kelly1991}.}
        
        \item \textbf{Exact} (in the sense of Barr) if it is regular and every internal equivalence relation is a kernel pair of some morphism.
        
        \item \textbf{Protomodular} if pullbacks along retractions exist and for every retraction $r\colon  x \to y$ in $C$ with a section $s$ and a pullback $k\colon a \to x$ along $r$, the pair $(s, k)$ is jointly extremal epimorphic.
        $$
                \begin{tikzcd}
a \arrow[d, "k"'] \arrow[r] \arrow[rd, "\lrcorner", phantom, very near start] & b \arrow[d] \\
x \arrow[r, "r"']                                                             & y          
\end{tikzcd}
        $$
        
        \item \textbf{Semi-abelian} if it is pointed, protomodular, exact, and has binary coproducts. The category $C$ is \textbf{co-semi-abelian} if its dual $C^\op$ is semi-abelian.

        \item \textbf{Locally presentable}, if $C$ is equivalent to a reflexive subcategory of a pre-sheaf category $[K,\Set]$ closed under large enough directed colimits for some small category $K$ \cite{LPC}.
    \end{enumerate}
\end{definition}
As abelian categories are the same as exact additive ones \cite{BarrGrilletOsdol1971}, semi-abelian categories can be viewed as abelian categories where additivity is weakened to pointed protomodularity with binary coproducts. Protomodularity can be reflected between categories via a jointly conservative family $(F_i\colon C\to D_i)_i$ of pullback preserving functors. This allows an easy way to verify that the category $\Grp(C)$ of internal groups of a finitely complete category $C$ is protomodular via the functors $\Grp(C)\to \Grp(\Set)$ induced by the representables.

\begin{lemma}\label{lem:charexactness}
    Let $C$ be a pointed, regular, and protomodular category. Then the following are equivalent:
    \begin{enumerate}
        \item The category $C$ is exact.
        \item Regular epis map kernels to kernels: For every commutative square
        $$
    \begin{tikzcd}
a \arrow[d] \arrow[r, "f'", two heads] \arrow[d, "k"', hook] & b \arrow[d, "m", hook] \\
x \arrow[r, "f"', two heads]                                & y                     
\end{tikzcd}
    $$
    where $f$ and $f'$ are regular epis, $k$ a kernel and $m$ a monomorphism, $m$ is a kernel.
    \item For every reflexive relation $\underset{r_2}{\overset{r_1}{R\rightrightarrows x}}$ in $C$, the morphism $r_1k_2$ is a kernel, where $k_2$ is the kernel of $r_2$.
    \end{enumerate}
\end{lemma}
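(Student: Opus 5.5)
I will prove the cycle of implications $(1)\Rightarrow(2)\Rightarrow(3)\Rightarrow(1)$, using throughout that in a pointed protomodular (hence Mal'tsev) category every reflexive relation is an equivalence relation, and that normal monomorphisms correspond to equivalence relations via $m \mapsto$ the relation whose kernel-of-second-projection composite is $m$. Concretely, for a monomorphism $m\colon b\to y$ I will use the standard fact (Bourn) that $m$ is a kernel if and only if it is normal to some equivalence relation $R$ on $y$, i.e.\ $m = r_1 k_2$ with $k_2 = \ker r_2$, and in that case $m$ is the kernel of the coequalizer of $R$ whenever $R$ is effective. In a regular category, kernels of regular epis are exactly the kernels, and effective equivalence relations are the kernel pairs.

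\textbf{$(1)\Rightarrow(2)$.} Let $k = \ker(g)$ with $g\colon x\to z$. The kernel pair $\mathrm{Eq}(g)$ of $g$ has $r_1k_2 = k$. Take the regular image $S$ of $\mathrm{Eq}(g)$ under $f\times f\colon x\times x\to y\times y$; this is a reflexive relation on $y$, hence an equivalence relation (Mal'tsev), hence effective by exactness, so $S = \mathrm{Eq}(q)$ for $q$ its coequalizer. Then $\ker q = s_1\ker(s_2)$, and I will show this subobject equals the image of $k$ under $f$, which is $m$ (since $f'$ is a regular epi and $m$ mono, $m$ is the image of $fk$). This identification uses regularity: the restriction of $\mathrm{Eq}(g)\to S$ to the fibres over $0$ in the second coordinate remains a regular epi because pullbacks of regular epis along the relevant maps are regular epis. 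Here protomodularity enters to guarantee that the comparison $\ker(r_2)\to\ker(s_2)$ is a regular epi, via the standard lemma that in a regular protomodular category, a regular epi between split epis induces a regular epi on kernels.

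\textbf{$(2)\Rightarrow(3)$.} Given a reflexive relation $(r_1,r_2)\colon R\rightrightarrows x$, factor $\langle r_1,r_2\rangle$ and consider the kernel $k_2\colon K\to R$ of $r_2$. Since $r_2$ is split by the reflexivity map $e$, protomodularity makes $(e,k_2)$ jointly extremal epic. I will apply (2) to the square with $f = r_1$ (a split, hence regular, epi), $k = k_2$, $f'$ the regular epi part of $r_1k_2$, and $m$ its mono part. Because $R$ is a relation and $r_2k_2=0$, the map $r_1k_2$ is already a monomorphism, so $f'$ is an isomorphism and $m=r_1k_2$, which (2) declares a kernel.

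\textbf{$(3)\Rightarrow(1)$.} Let $R$ be an equivalence relation on $x$. By (3), $n = r_1k_2$ is a kernel, say of $q\colon x\to q(x)$, chosen a regular epi via image factorization. I will show $R = \mathrm{Eq}(q)$. Certainly $R\le \mathrm{Eq}(q)$ needs checking: I would use that in a protomodular pointed category an equivalence relation is determined by its normalization, i.e.\ two equivalence relations $R\le R'$ with the same normalization coincide (the comparison $R\to R'$ is a morphism of split epis over $x$ inducing an iso on kernels, hence an iso by protomodularity). Normalization of $\mathrm{Eq}(q)$ is $\ker q = n$, equal to that of $R$. To get the inclusion $R\le\mathrm{Eq}(q)$, i.e.\ $qr_1=qr_2$, I will use the Mal'tsev difference: the map $R\to x$ "$r_1r_2^{-1}$" is not available in general, so instead I will compare $R$ with $R\cap \mathrm{Eq}(q)$, whose normalization is $n\wedge n = n$, giving $R\cap\mathrm{Eq}(q)=R$ by the same protomodular argument. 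This last step, making the normalization argument rigorous, is the main obstacle, together with the regular-epi-on-kernels lemma in $(1)\Rightarrow(2)$.
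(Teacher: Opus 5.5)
Your cycle $(1)\Rightarrow(2)\Rightarrow(3)\Rightarrow(1)$ is correct and is essentially the standard Janelidze--M\'arki--Tholen argument the paper cites: the image of the kernel pair of $g$ along $f$ is effective by exactness and has normalization equal to the image of $k$, by the regular-epi-on-kernels lemma you invoke, which holds in any regular Mal'tsev category; in $(2)\Rightarrow(3)$ you take $f'=1$; and in $(3)\Rightarrow(1)$ you use that an equivalence relation is determined by its normalization, where your intersection trick works, though $qr_1=qr_2$ also follows at once from $qr_1e=q=qr_2e$, $qr_1k_2=0=qr_2k_2$ and the joint epimorphicity of $(e,k_2)$. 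Do delete the opening ``standard fact'' that a mono normal to some equivalence relation is a kernel: it is false without exactness and would make (3) automatic (in torsion-free abelian groups, a pointed regular protomodular category, $2\mathbb{Z}\hookrightarrow\mathbb{Z}$ is normal to congruence mod $2$ without being a kernel), and none of your implications actually uses it.
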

\begin{proof}
    Proof is essentially given in \parencite[381--382]{JANELIDZE2002367}.
\end{proof}

\section{Properties of Commutative Hopf Algebras}
Consider a symmetric monoidal category $A$ and consider the category $\Mon_A^\com$ of commutative monoids in $A$ with respect to the tensor product. Notice how the tensor product defines the coproduct in $\Mon_A^\com$. The category $\Hopf_A^\com$ of commutative Hopf $A$-algebras is defined as the category $\Grp((\Mon_A^\com)^\op)^\op$ of internal cogroup objects in $\Mon_A^\com$. As a shorthand, we write $\Hopf^\com_R$ for the category $\Hopf^\com_{\Mod_R}$ of Hopf $R$-algebras for a commutative ring $R$. It is known that $(\Hopf_R^\com)^\op$ is equivalent to the category of affine group schemes over a commutative ring $R$ \cite{Milne2012AffineGroupSchemes}. 
\begin{proposition}\label{prop:easyproperties}
    Let $A$ be a locally presentable symmetric monoidal category. Assume that the tensor product preserves directed colimits componentwise. Then the category of $\Hopf_A^\com$ of commutative Hopf $A$-algebras is a locally presentable, pointed, and protomodular category. 
\end{proposition}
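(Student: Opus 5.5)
The plan is to check the three properties separately. Local presentability and pointedness are formal. Protomodularity of $\Hopf_A^\com$ itself, as opposed to its dual, is the substantive part, and I would prove it via a Radford-type decomposition of split epimorphisms.

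\textbf{Local presentability.} Since $A$ is locally presentable and $\otimes$ preserves directed colimits in each variable, the free commutative monoid monad on $A$ is accessible. Hence $\Mon_A^\com$ is locally presentable. An object of $\Hopf_A^\com$ is an object of $\Mon_A^\com$ with structure maps $\Delta\colon H\to H\otimes H$, $\varepsilon\colon H\to I$ and $S\colon H\to H$, subject to equations between morphisms. The domain and codomain of these maps are built from $H$ by finite coproducts in $\Mon_A^\com$, and these coproducts commute with directed colimits. So $\Hopf_A^\com$ is the category of models of a finitary sketch inside a locally presentable category, equivalently an iso-inserter/equifier construction. By the standard closure results in \cite{LPC} it is locally presentable, and in particular complete and cocomplete.

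\textbf{Pointedness.} The unit $I$ with its trivial Hopf structure is initial in $\Mon_A^\com$. For any Hopf algebra $H$, the counit $\varepsilon$ is the unique Hopf morphism $H\to I$, since a coalgebra map into $I$ must be the counit. Hence $I$ is a zero object.

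\textbf{Protomodularity.} In a pointed category with finite limits, protomodularity is equivalent to the split short five lemma. Fix a split epimorphism $p\colon H\to B$ with section $s$, and let $k\colon K\to H$ be its kernel. This kernel is the pullback of $p$ along $I\to B$, namely the Hopf kernel $K=H\,\square_B\, I$, computed as the equalizer in $A$ of $(1\otimes p)\Delta$ and $1\otimes\eta$. I will show that
\[
\Phi=(E\otimes p)\Delta\colon H\to K\otimes B,\qquad E=\mu(1\otimes S s p)\Delta ,
\]
is an isomorphism in $A$ with inverse $\mu(k\otimes s)$. The argument has three steps.
\begin{enumerate}
\item Because $H$ is commutative, $E$ is a morphism of monoids.
\item The Hopf axioms together with $ps=1$ give $(1\otimes p)\Delta E=E\otimes\eta$. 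Hence $E$ factors through the equalizer $K$.
\item The composites $\mu(k\otimes s)\Phi=1$ and $\Phi\,\mu(k\otimes s)=1$ follow by the usual Sweedler-style diagram chases, which are valid in any symmetric monoidal category.
\end{enumerate}

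Now take a morphism $g\colon H\to H'$ of split extensions over $B$ that restricts to an isomorphism $K\to K'$. The decomposition is natural in such morphisms, so $g$ corresponds to $g|_K\otimes 1_B$ and is therefore an isomorphism. Since the forgetful functor to $A$ reflects isomorphisms, this gives the split short five lemma and hence protomodularity.

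The main obstacle is step 2, the factorization of $E$ through $K$, together with the identification of the kernel with an equalizer in $A$. One must check that the relevant pullback in $\Hopf_A^\com$ is really created by the equalizer in $A$, which requires that $\otimes$ preserve this equalizer. This holds in $\Mod_R$ for flat situations and in $\mathrm{Vect}_k$. In general I would first try to argue directly that the equalizer in $A$ inherits the commutative Hopf structure. If that fails, I would instead verify the five lemma using the maps $E$ and $\mu(1\otimes s)$ on $H$ alone, without passing through $K\otimes B$.
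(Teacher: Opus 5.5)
\textbf{The protomodularity part has a genuine gap: it targets a false claim.} Your arguments for local presentability and pointedness are fine and agree in substance with the paper.

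Despite the wording of the statement, what the paper proves (and uses in Theorem~\ref{thm:cosemiabelian}) is \emph{co}protomodularity. Since $\Mon_A^\com$ is cocomplete, $(\Hopf_A^\com)^\op=\Grp((\Mon_A^\com)^\op)$ is a category of internal groups in a finitely complete category. The representables induce a jointly conservative, limit-preserving family of functors into $\Grp(\Set)$, and this family reflects protomodularity.

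$\Hopf_A^\com$ itself is not protomodular, already for $A$ the category of $k$-vector spaces. Take $H=k^{S_3}$ and $B=k^{C_2}$. Let $p\colon H\to B$ be restriction along $C_2=\langle(12)\rangle\hookrightarrow S_3$, and let $s\colon B\to H$ be precomposition with the sign map, so $ps=1_B$. The kernel of $p$ in $\Hopf_k^\com$ is dual to the cokernel of $C_2\hookrightarrow S_3$ among affine group schemes. That cokernel is the quotient by the normal closure of $\langle(12)\rangle$, which is all of $S_3$, so $\ker p=k$. (Directly: the Hopf subalgebras of $k^{S_3}$ are the $k^{S_3/N}$ with $N$ normal, and $p$ is trivial on $k^{S_3/N}$ only when $(12)\in N$, i.e.\ $N=S_3$.) Hence $s$, as a morphism of split extensions $(B,1_B,1_B)\to(H,p,s)$, induces the isomorphism $k\to k$ on kernels without being an isomorphism. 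Equivalently, the pair $(s,\ker p)$ factors through the proper injection $s$, so it is not jointly extremal epimorphic.

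\textbf{Where your argument breaks.} The failing step is the identification of the kernel with the equalizer $H^{\mathrm{co}B}$ of $(1\otimes p)\Delta$ and $1\otimes\eta$ in $A$. Your steps 1--3 are correct for $H^{\mathrm{co}B}$; over a field they are just Radford's decomposition $H\cong H^{\mathrm{co}B}\otimes B$. However, $H^{\mathrm{co}B}$ is in general only a (left) coideal subalgebra, not a Hopf subalgebra. In the example it is the $3$-dimensional algebra of functions on $S_3/\langle(12)\rangle$, which is not closed under $\Delta$ because $\langle(12)\rangle$ is not normal.

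The obstruction has nothing to do with $\otimes$ preserving equalizers, which is automatic over a field. Limits in $\Hopf_A^\com$ are colimits in $\Grp((\Mon_A^\com)^\op)$, and they are not created by the forgetful functor to $A$ or to $\Mon_A^\com$. The actual kernel is the largest Hopf subalgebra contained in $H^{\mathrm{co}B}$, which can be far smaller. So an isomorphism on kernels gives no control over $H^{\mathrm{co}B}$.

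Your fallback of running the five lemma on $H$ alone cannot succeed either, since the conclusion is false. What you should prove instead is protomodularity of $(\Hopf_A^\com)^\op$, which is the formal internal-group argument above.
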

\begin{proof}
    Local presentability of $\Mon_A^\com$ is shown in \cite{Porst01062008}. As coalgebraic structures within a locally presentable category form a locally presentable category by \cite[Remark 2.63]{LPC}, $\Hopf_A^\com$ is locally presentable. Since the representable functors induce a jointly conservative family of continuous functors $\Grp(C)\to \Grp(\Set)$ for any finitely complete category $C$, it follows that $\Hopf^\com_A$ is pointed coprotomodular.
\end{proof}

\subsection{Coregularity}

Coregularity is proven in two steps. First, we lift the orthogonal factorization system of surjections and injections from $\Mon^{\com}_k$ to $\Hopf^{\com}_k$, using the fact that injective morphisms in $\Mon^{\com}_k$ are closed under tensoring. The lifting of such factorization systems along monadic functors respecting the left class is established in \cite[Proposition 3.7]{Wissmann2022Minimality}, where it is shown that the properness assumption from Linton's original result \cite{Linton1969} is not necessary. With a slight modification, this proof extends to algebras in any finitely complete category, provided the left class is closed under products. Second,  we use the result by Takeuchi, which states that injective morphisms in $\Hopf^{\com}_k$ are faithfully flat, ensuring that injections are pushout-stable extremal monomorphisms.

\begin{definition}
    Let $f\colon R\to S$ be a morphism of commutative rings. We say that $f$ is faithfully flat if the induced functor $S\otimes_R(-)\colon \Mod_R\to \Mod_S$ between the module categories preserves finite limits and is faithful.
\end{definition}

\begin{theorem}[Faithful Flatness]
    Let $k$ be a field. Then any injective morphism $H_1\to H_2$ of commutative Hopf $k$-algebras is faithfully flat as a morphism of commutative rings. 
\end{theorem}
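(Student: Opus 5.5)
We will follow Takeuchi's original strategy and reduce to the finitely generated case, then treat that case geometrically. Let $H_1\subseteq H_2$ be the injective morphism, viewed as an inclusion of a Hopf subalgebra.

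\textbf{Step 1: Reduction to finitely generated Hopf algebras.} Every commutative Hopf $k$-algebra is the directed union of its finitely generated Hopf subalgebras. We will write $H_2=\bigcup_\alpha B_\alpha$ with $B_\alpha$ finitely generated Hopf subalgebras, and $H_1=\bigcup_\alpha A_\alpha$ with $A_\alpha=B_\alpha\cap H_1$. The intersection of Hopf subalgebras is a Hopf subalgebra, and we can enlarge the $B_\alpha$ so that $A_\alpha$ is finitely generated as well.

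Flatness is preserved under directed colimits, since $\mathrm{Tor}$ commutes with filtered colimits. Faithful flatness additionally requires $\mathfrak m H_2\neq H_2$ for every maximal ideal $\mathfrak m$ of $H_1$. For this it suffices that each $A_\alpha\to B_\alpha$ is faithfully flat and that the system is compatible, which we ensure by a base-change argument $B_\alpha\otimes_{A_\alpha}A_\beta\to B_\beta$. Alternatively, we can use the tensor-closedness: faithful flatness of $H_1\to H_2$ is equivalent to $H_2$ being a projective generator relative to $H_1$, tested on finitely generated pieces.

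\textbf{Step 2: The finitely generated case.} For $A\subseteq B$ finitely generated, the map $G=\operatorname{Spec}B\to G'=\operatorname{Spec}A$ is a homomorphism of affine algebraic groups with schematically dense image. By generic flatness (Grothendieck), there is a dense open $U\subseteq G'$ over which $G\to G'$ is flat.

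We then use homogeneity. The morphism is equivariant for the $G$-action on $G$ and on $G'$ via translation. Flatness of $B$ over $A$ is therefore a local property that can be translated from $U$ to all of $G'$, after base change to $\bar k$ and descent back to $k$, since flatness descends along field extensions.

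For faithfulness, the image of $G\to G'$ is a closed subgroup (images of homomorphisms of algebraic groups are closed). It is dense because $A\to B$ is injective, hence the map is surjective. Flat plus surjective gives faithfully flat.

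\textbf{Main obstacle.} We expect the hardest step to be Step 1: passing faithful flatness, rather than mere flatness, to the colimit, and controlling the non-noetherian, non-algebraic case. If the compatibility argument fails, we will instead follow Takeuchi directly. This means proving that $H_2$ is a faithfully coflat $H_2/H_1^+H_2$-comodule and that $H_2\cong H_1\otimes \overline{H}$ after a faithfully flat base change, via the isomorphism $H_2\otimes_{H_1}H_2\cong H_2\otimes \overline{H}$ with $\overline H=H_2/H_1^+H_2$. That isomorphism shows $H_2$ is faithfully flat over $H_1$ by faithfully flat descent.
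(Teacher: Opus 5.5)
You give no argument for the case with nilpotents in Step~2, and your fallback is circular, so the proposal does not prove the theorem in positive characteristic. For comparison, the paper does not prove this statement itself; it cites Takeuchi's Theorem~3.1, whose argument is Hopf-algebraic and works in every characteristic. Your geometric route (reduce to finite type, then generic flatness, homogeneity and closed image) is a legitimate alternative strategy.

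\textbf{The gap in Step~2.} Generic flatness needs the base $\operatorname{Spec}(A\otimes_k\bar k)$ to be reduced, and it fails in general otherwise. For instance, $k[\epsilon]/(\epsilon^2)\to k$ is not flat, and its base is a single point.

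In characteristic $p$ this base is often non-reduced. This happens for infinitesimal groups such as $\alpha_p$ or $\mu_p$, and even for reduced $A$ when $k$ is imperfect. When $\operatorname{Spec}A$ is infinitesimal, its underlying space is one point, so translation has nothing to propagate. The whole content of the theorem is then flatness at that single point, which your argument never addresses.

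A concrete test case is $k[t]/(t^p)\hookrightarrow k[x]/(x^{p^2})$, $t\mapsto x^p$, which is Frobenius $\alpha_{p^2}\to\alpha_p$. As written, you only obtain the theorem when the finitely generated Hopf subalgebras of $H_1$ are geometrically reduced, e.g.\ in characteristic $0$ by Cartier's theorem. The nilpotent case needs a separate, nontrivial argument, or Takeuchi's theorem.

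\textbf{The fallback is circular.} The map $H_2\otimes_{H_1}H_2\to H_2\otimes\bar H$, $x\otimes y\mapsto xy_1\otimes\bar{y}_2$, is bijective for every Hopf subalgebra, with inverse $x\otimes\bar z\mapsto xS(z_1)\otimes z_2$. No flatness input is used. It only says that the base change of $H_1\to H_2$ along itself is free. Descending that to $H_1\to H_2$ requires $H_1\to H_2$ to be faithfully flat already. A surjection $R\to R/I$ has the same property without being flat. In Takeuchi's approach the real work is proving faithful coflatness of $H_2$ over $\bar H$ and the resulting equivalence between $H_1$-modules and relative Hopf modules. You name the former but give no argument for it.

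\textbf{Step~1 is not the obstacle.} Flatness passes to the directed union because tensor products commute with filtered colimits. For faithfulness, suppose $1\in\mathfrak m H_2$ for a maximal ideal $\mathfrak m$ of $H_1$. Then $1\in(\mathfrak m\cap A)B$ for some pair $A\subseteq B$ of finitely generated Hopf subalgebras of $H_1\subseteq H_2$. This contradicts faithful flatness of $A\to B$, since $\mathfrak m\cap A$ is proper. No compatibility or base-change condition is needed.

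Index by such pairs instead of claiming that $B_\alpha\cap H_1$ can be made finitely generated. That claim is true but nontrivial, and is usually derived from faithful flatness itself. Drop the ``projective generator'' reformulation, which is not equivalent to faithful flatness.
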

\begin{proof}
    A proof can be found in \cite[Theorem 3.1]{Takeuchi1972}.
\end{proof}

\begin{theorem}[Coregularity]\label{thm:coregularity}
    The category $\Hopf_k^\com$ is coregular, with surjective and faithfully flat morphisms being the classes of epimorphisms and extremal monomorphisms, respectively.
\end{theorem}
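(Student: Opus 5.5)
We prove the dual statement: $(\Hopf_k^\com)^\op$ is regular, where the image factorization consists of regular (stable) epimorphisms followed by monomorphisms. In $\Hopf_k^\com$ this means every morphism factors as an epimorphism followed by a pushout-stable extremal monomorphism, and $\Hopf_k^\com$ has finite colimits. Finite colimits exist because $\Hopf_k^\com$ is locally presentable (Proposition~\ref{prop:easyproperties}). The factorization will be the (surjection, injection) factorization. The injections will turn out to be exactly the faithfully flat morphisms, by the Faithful Flatness theorem and the fact that faithfully flat ring maps are injective.

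The first step is the factorization itself. Given $f\colon H_1\to H_2$ in $\Hopf_k^\com$, its set-theoretic image $f(H_1)\subseteq H_2$ is a $k$-subalgebra. It is also a sub-Hopf algebra. The comultiplication maps $f(H_1)$ into $(f\otimes f)(H_1\otimes H_1)$, and because $k$ is a field, $f(H_1)\otimes f(H_1)\to H_2\otimes H_2$ is injective, so this lands in $f(H_1)\otimes f(H_1)$. The counit and antipode restrict as well. Hence $f=me$ with $e$ surjective and $m$ injective, both Hopf morphisms. (This is the concrete instance of lifting the factorization system from $\Mon^\com_k$ as described above, using that injections are closed under $\otimes_k$.) Orthogonality of surjections against injections holds at the level of underlying algebras, and the diagonal filler is automatically a Hopf map because the forgetful functor is faithful and $e\otimes e$ is surjective. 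Surjections are epimorphisms in $\Hopf_k^\com$ since the forgetful functor is faithful.

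The second step identifies the right class with the extremal monomorphisms. Take an injection $m$ and suppose $m=ge$ with $e$ an epimorphism in $\Hopf_k^\com$. Factor $e=m'e'$. Then $m'$ is injective, hence faithfully flat by Takeuchi. A faithfully flat map that is an epimorphism of commutative rings is an isomorphism, and $m'$ is a ring epimorphism because Hopf epimorphisms are ring epimorphisms. This last claim is the main obstacle: I would prove it through the pushout $H\otimes_{H'}H$ in $\Hopf^\com_k$, which is computed on underlying rings. Since $e$ is an epimorphism, the two coprojections into $H\otimes_{H'}H$ coincide, which means $m'$ is a ring epimorphism. So $m'$ is invertible, $e$ is surjective, and $g$ is injective. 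Then $g$ and $e$ are both injective, so $e$ is bijective, hence an isomorphism. Thus injections are extremal monomorphisms. Conversely, if $m$ is an extremal monomorphism, its factorization $m=m''e''$ makes $e''$ a surjective epimorphism through which the monomorphism $m$ factors, so $e''$ is an isomorphism and $m$ is injective.

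The third step is pushout-stability, dual to pullback-stability. The pushout of an injection $m\colon H'\to H$ along $g\colon H'\to K$ is $K\otimes_{H'}H$, as rings. The map $K\to K\otimes_{H'}H$ is the base change of a faithfully flat map, so it is faithfully flat, and in particular injective. Therefore the extremal monomorphisms are pushout-stable, $(\Hopf_k^\com)^\op$ is regular, and the two classes are as stated in Theorem~\ref{thm:coregularity}.
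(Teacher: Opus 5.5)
Your proof is correct and follows essentially the same route as the paper. Both arguments lift the surjection/injection factorization from $\Mon_k^\com$, use Takeuchi's faithful flatness, and use the fact that pushouts are computed in commutative algebras to get pushout-stability (a base change of a faithfully flat map is injective) and extremality (a faithfully flat Hopf epimorphism is an isomorphism, via its cokernel pair). The differences are only presentational: you build the image factorization by hand rather than citing Wissmann's lifting result, and you quote standard commutative-algebra facts where the paper proves the needed special cases inline using the section $s_1$ and the cokernel pair.
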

\begin{proof}
    Consider the orthogonal factorization system $(E,M)$ of $\Mon_k^\com$ of surjective and injective morphisms. As $k$ is a field, the class $M$ is closed under tensoring and it follows that $(E',M')$ is an orthogonal factorization system on $\Hopf^\com_k$, where $E' = U^{-1}(E)$, $M' = U^{-1}(M)$ and $U\colon \Hopf_k^\com\to \Mon^\com_k$ is the forgetful functor \cite[Proposition 3.7]{Wissmann2022Minimality}. By Takeuchi's theorem, every morphism in $M'$ is faithfully flat. Next, we demonstrate that the class $M'$ is pushout-stable. Consider a morphism $f\in M'$, its pushout $\bar{f}$ and the cokernel pair of $\bar{f}$ in $\Hopf_k^\com$. These pushouts are computed in the underlying category of $\Mon_k^\com$:
    $$
    \begin{tikzcd}
H \arrow[d, "f"'] \arrow[r, "g"] \arrow[rd, very near end, phantom, "\ulcorner"] & X \arrow[d, "\bar{f}" description] \arrow[r, "\bar{f}"] \arrow[rd, very near end, phantom, "\ulcorner"] & G\otimes_H X \arrow[d, "s_2"] \\
G \arrow[r, "\bar{g}"']                     & G\otimes_H X \arrow[r, "s_1"']                                                & G\otimes_H G\otimes_H X                
\end{tikzcd}
    $$
    Note that $G\otimes_H\bar{f} = s_1$ is a section and by faithfulness of $G\otimes_H(-)$ we witness the injectivity of $\bar{f}\colon X\to G\otimes_HX$. Thus $M'$ is pushout stable. For any epimorphism $f\in M'$, the morphism $G\otimes_Hf$ is an isomorphism as one of the morphisms of the cokernel pair of $f$. The faithfulness of $G\otimes_H(-)$ implies that $f$ is both a monic and epic morphism of modules and thus is an isomorphism. Hence, every morphism in $M'$ is a pushout stable extremal monomorphism, proving coregularity. Conversely, as every surjection is an epimorphism, we have by orthogonality that every extremal monomorphism is in $M'$. This shows that $E'$ and $M'$ are the classes of epimorphisms and extremal monomorphisms, respectively. 
\end{proof}

\subsection{Coexactness}
We prove the coexactness of $\Hopf_k^{\com}$. Again, by Takeuchi's work, we have a crucial characterization of normal Hopf ideals, and this is the key to showing the rest of the coexactness of $\Hopf_k^\com$. These observations assemble the necessary components to prove the co-semi-abelianness of $\Hopf_k^\com$.

\begin{definition}[Hopf ideal]
    Let $X$ be a commutative Hopf $k$-algebra and let $I$ be a subset of $X$. We call $I$ a Hopf ideal of $X$, if there is a morphism $f\colon X\to Y$ of $\Hopf_k^\com$, where $I = f^{-1}(0_Y)$. If $f$ may be chosen as a cokernel in $\Hopf^\com_k$, then $I$ is called a normal Hopf ideal of $X$.
\end{definition}
This notion of a Hopf ideal is equivalent to the standard one \cite{Montgomery1993}.

\begin{theorem}\label{thm:normalIdeals}
    Let $X$ be a commutative Hopf $k$-algebra with a Hopf ideal $I$ of $X$. Then the following are equivalent.
    \begin{enumerate}
        \item $I$ is a normal Hopf ideal.
        \item $I = XA^+$ for some Hopf subalgebra $A$ of $X$, where $A^+\coloneqq \varepsilon_A^{-1}(0)$ is the augmentation ideal of $A$ and $\varepsilon_A\colon A\to k$ is the augmentation of $A$.
        \item For each $x\in I$, we have $x_1S(x_3)\otimes x_2\in X\otimes I$.\footnote{The symbol $S$ denotes the antipode. We employ Sweedler's notation, where for the comultiplication we write $\Delta(x)$ as $x_1\otimes x_2$ and $(\Delta\otimes id)\circ \Delta = (id\otimes\Delta)\circ \Delta(x)$ as $x_1\otimes x_2\otimes x_3$.}
    \end{enumerate}
\end{theorem}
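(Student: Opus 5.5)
Most of this theorem is Takeuchi's correspondence \cite{Takeuchi1972}; the plan is to reduce the categorical notion in (1) to the classical notion of a normal Hopf ideal and then cite Takeuchi for the remaining implications.

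\textbf{Step 1: identify cokernels in $\Hopf_k^\com$.} Cokernels in $\Hopf_k^\com$ are the duals of kernels in affine group schemes. For a morphism $g\colon A\to X$, the cokernel is the pushout of $g$ along the unit $A\to k$ given by $\varepsilon_A$. By the proof of Theorem~\ref{thm:coregularity}, such pushouts are computed in $\Mon_k^\com$, so the cokernel is
\[
X\otimes_A k\cong X/XA'^+ ,
\]
where $A'$ is the image of $g$, a Hopf subalgebra. Its kernel ideal is $XA'^+$. This gives $(1)\Rightarrow(2)$ directly.

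\textbf{Step 2: $(2)\Rightarrow(1)$ and $(2)\Rightarrow(3)$.} Given a Hopf subalgebra $A$, take the cokernel of the inclusion $A\hookrightarrow X$. By Step 1 its kernel ideal is $XA^+$, so $XA^+$ is normal. For (3), take $x=ya$ with $a\in A^+$ and use multiplicativity of $\Delta$ and commutativity:
\[
(ya)_1S((ya)_3)\otimes(ya)_2=y_1a_1S(a_3)S(y_3)\otimes y_2a_2 .
\]
Since $A$ is a Hopf subalgebra, $a_1S(a_3)\otimes a_2\in A\otimes A$. Applying $\mathrm{id}\otimes\varepsilon$ to it gives $a_1S(a_2)=\varepsilon(a)=0$. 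The aim is to show that the coadjoint coaction $a\mapsto a_1S(a_3)\otimes a_2$ maps $A^+$ into $A\otimes A^+$; this follows from $\varepsilon(a_2)$ computations combined with the identity $(\mathrm{id}\otimes\varepsilon)$, and is routine. Multiplying by $y_1S(y_3)\otimes y_2$ and using that $X\otimes I$ is an ideal of $X\otimes X$ then lands the expression in $X\otimes XA^+$.

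\textbf{Step 3: $(3)\Rightarrow(2)$ (the main obstacle).} This is the substantive direction and it needs Takeuchi. Set
\[
A=\{x\in X: x_1\otimes \pi(x_2)=x\otimes 1\},
\]
the coinvariants of the quotient $\pi\colon X\to X/I$. Condition (3) says $I$ is stable under the adjoint coaction, which is the classical normality condition. Takeuchi's theorem \cite[Theorem 4.3]{Takeuchi1972} then gives that $A$ is a Hopf subalgebra, that $X$ is faithfully flat over $A$ (this is the Faithful Flatness theorem applied to $A\hookrightarrow X$), and that $I=XA^+$. Checking that $A$ is closed under the antipode and comultiplication uses (3), and I would follow Takeuchi's argument for that.

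The key equality $I=XA^+$ follows by faithfully flat descent. There is a canonical isomorphism $X\otimes_A X\cong X\otimes X/XA^+$, and comparing it with the corresponding map for $X/I$ shows that $XA^+\subseteq I$ is an equality after applying $X\otimes_A-$. Faithfulness of $X\otimes_A-$ then gives the equality itself. I expect this descent step, together with showing $A$ is a Hopf subalgebra, to be the hardest part; in the write-up I would cite Takeuchi for it rather than reprove it.
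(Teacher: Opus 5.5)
Your proposal is correct and follows the paper's route. You get (1)$\Leftrightarrow$(2) by identifying the cokernel of $g\colon A\to X$ with the pushout $X\otimes_A k\cong X/X\,g(A)^+$ computed in $\Mon_k^\com$, and (3)$\Rightarrow$(2) from Takeuchi's Theorem~4.3; the one difference is that you check (2)$\Rightarrow$(3) by hand (correctly, since $a_1S(a_3)\otimes a_2\in\ker(\mathrm{id}_A\otimes\varepsilon)=A\otimes A^+$ for $a\in A^+$), whereas the paper cites Takeuchi for both directions. Your descent sketch in Step 3 is not an independent argument: $X\otimes_A X\cong X\otimes X/XA^+$ holds for every Hopf subalgebra, and injectivity of $X\otimes_A X\to X\otimes X/I$ is equivalent to $I=XA^+$ itself, which is exactly where condition (3) has to be used. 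Since you defer that step to Takeuchi just as the paper does, nothing is missing.
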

    
\begin{proof}
    The equivalence of the first two conditions is a straightforward verification, and the third condition is equivalent to the second by Theorem 4.3 in \cite{Takeuchi1972}.
\end{proof}

\begin{theorem}\label{thm:exactness}
    The category $\Hopf_k^\com$ is coexact.
\end{theorem}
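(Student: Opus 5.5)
The plan is to apply Lemma~\ref{lem:charexactness} to the opposite category $C=(\Hopf_k^\com)^\op$. By Proposition~\ref{prop:easyproperties}, $C$ is pointed and protomodular, and by Theorem~\ref{thm:coregularity} it is regular. So it suffices to verify condition 2 in its dual form. In $\Hopf_k^\com$, regular monos of $C$ become injective (faithfully flat) Hopf algebra maps, and kernels become cokernels. Explicitly, consider a commutative square in $\Hopf_k^\com$
\[
\begin{tikzcd}
Y \arrow[r, "m", two heads] \arrow[d, "f"', hook] & B \arrow[d, "f'", hook] \\
X \arrow[r, "q"', two heads] & A
\end{tikzcd}
\]
where $f,f'$ are injective, $q$ is a cokernel, and $m$ is an epimorphism, hence surjective by Theorem~\ref{thm:coregularity}. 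I must show that $m$ is a cokernel. Equivalently, by the definition of normal Hopf ideals, I must show that $\ker m$ is a normal Hopf ideal of $Y$. Since $m$ is surjective, it is the cokernel of whatever map realizes its kernel ideal, once that ideal is normal.

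The key steps, in order, are as follows.
\begin{enumerate}
\item Identify $\ker m = f^{-1}(\ker q)$. This uses injectivity of $f'$: $m(y)=0$ iff $f'm(y)=0$ iff $qf(y)=0$. In particular, $\ker m$ is a Hopf ideal of $Y$.
\item By Theorem~\ref{thm:normalIdeals} applied to $q$, the ideal $I=\ker q$ satisfies condition (3): $x_1S(x_3)\otimes x_2\in X\otimes I$ for every $x\in I$.
\item Pull this back along $f$. For $y\in J:=f^{-1}(I)$, the element $z=y_1S(y_3)\otimes y_2\in Y\otimes Y$ satisfies $(f\otimes f)(z)\in X\otimes I$, since $f$ is a Hopf map.
\item Conclude that $z\in Y\otimes J$, so that $J$ satisfies condition (3) and is therefore normal by Theorem~\ref{thm:normalIdeals}.
\end{enumerate}

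The main obstacle is step 4, passing from $(f\otimes f)(z)\in X\otimes I$ to $z\in Y\otimes J$. Over a field this is linear algebra. The map $f\otimes f$ factors as $(f\otimes \mathrm{id}_X)\circ(\mathrm{id}_Y\otimes f)$, and $f\otimes\mathrm{id}_X$ is injective. Hence $(\mathrm{id}_Y\otimes f)(z)\in Y\otimes I$, once we know $(f\otimes\mathrm{id})^{-1}(X\otimes I)=Y\otimes I$.

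That identity holds over a field. Choose a complement of $f(Y)$ in $X$; then $X\otimes I$ intersected with $f(Y)\otimes X$ is $f(Y)\otimes I$. It remains to check $(\mathrm{id}\otimes f)^{-1}(Y\otimes I)=Y\otimes f^{-1}(I)$. This is the kernel computation for $\mathrm{id}_Y\otimes(Y\to X/I)$, which is exact because $Y\otimes(-)$ is exact over $k$.

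Faithful flatness is not even needed here; only vector-space exactness is used. If the Sweedler-type condition behaves differently than expected, I would instead use condition (2). Write $I=XA^+$ and try to show $J=Y(A\cap f(Y))^+$ via Takeuchi's correspondence. That route seems harder, so condition (3) is the primary plan.
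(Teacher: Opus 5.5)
Your proof is correct and follows essentially the same route as the paper: you dualize condition~2 of Lemma~\ref{lem:charexactness}, identify $\ker m = f^{-1}(\ker q)$ using injectivity of $f'$, and verify Takeuchi's condition~(3). Your preimage computation along $f\otimes f=(f\otimes\mathrm{id})(\mathrm{id}\otimes f)$ is just a map-level rephrasing of the paper's interchange identity, which in your notation (viewing $Y\subseteq X$) reads $(X\otimes I)\cap(Y\otimes Y)=(X\cap Y)\otimes(I\cap Y)=Y\otimes J$ by left exactness of $\otimes_k$. One small wording slip: it is the regular \emph{epimorphisms} of $C$, not the regular monos, that become the injective maps in $\Hopf_k^\com$; your explicit square already uses this correctly.
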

\begin{proof}
Theorem \ref{thm:coregularity} shows that $\Hopf_k^\com$ is coregular. By Lemma \ref{lem:charexactness}, it suffices to show that the image along a regular epimorphism in $(\Hopf_k^\com)^{\op}$ of a normal subobject is normal. Consider the commutative diagram
    $$
    \begin{tikzcd}
X \arrow[r, "f'", two heads] \arrow[d, "k"', hook] & Z \arrow[d, "k'", hook] \\
Y \arrow[r, "f"', two heads]                       & W                    
\end{tikzcd}
    $$
    in $\Hopf_k$, where $f$ is a cokernel, $f'$ is an epimorphism, and $k, k'$ are inclusions. We show that $f'$ is a cokernel. Since $f'$ is surjective, it suffices to show that the Hopf ideal $I = f'^{-1}(0)$ is normal. Let $x\in I$. By Theorem \ref{thm:normalIdeals} it suffices to show that $x'\coloneqq x_1S(x_3)\otimes x_2\in X\otimes I$. Since $x\in J \coloneqq f^{-1}(0)$, we have that 
    $$
    x'\in (Y\otimes J)\cap (X\otimes X) = (Y\cap X)\otimes (J\cap X) = X\otimes I
    $$
    by the fact that intersections and tensor products satisfy interchange by the componentwise left-exactness of the tensor $\otimes_k$. Thus $I$ is a normal Hopf ideal, which shows that $f'$ is a cokernel.
\end{proof}

\subsection{Conclusion}

\begin{theorem}\label{thm:cosemiabelian}
    The category $\Hopf^\com_k$ of commutative Hopf algebras over a field $k$ is a co-semi-abelian category.
\end{theorem}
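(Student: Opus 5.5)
The theorem follows by assembling the pieces already established. By definition, $\Hopf^\com_k$ is co-semi-abelian exactly when $(\Hopf^\com_k)^\op$ is pointed, protomodular, Barr-exact, and has binary coproducts. I will check each condition on the dual side, as a property of $\Hopf^\com_k$ itself: pointed, coprotomodular, coexact, and having binary products.

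\emph{Pointedness and coprotomodularity.} I take $A = \Mod_k$. This is locally presentable and symmetric monoidal, and $\otimes_k$ preserves directed colimits in each variable, being a left adjoint in each variable. So Proposition \ref{prop:easyproperties} applies. It shows that $\Hopf^\com_k$ is locally presentable, pointed (the zero object is $k$), and coprotomodular.

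\emph{Coexactness.} This is Theorem \ref{thm:exactness}, which rests on Theorem \ref{thm:coregularity} for coregularity and on Lemma \ref{lem:charexactness}. One bookkeeping point needs care: Lemma \ref{lem:charexactness} is stated for a pointed, regular, protomodular category. I will apply it to $C = (\Hopf_k^\com)^\op$, which has all three properties by the previous paragraph and Theorem \ref{thm:coregularity}. Condition (2) of the lemma in $C$ dualizes exactly to the statement verified in the proof of Theorem \ref{thm:exactness}: surjective cokernels send Hopf subalgebra inclusions (normal monos in $C$ are cokernels in $\Hopf^\com_k$) to cokernels.

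\emph{Binary products in $\Hopf^\com_k$.} These come for free. A locally presentable category is complete and cocomplete, so $\Hopf^\com_k$ has all small limits, in particular binary products. This gives binary coproducts in $(\Hopf^\com_k)^\op$. Finite colimits in $\Hopf^\com_k$, needed for finite limits in the dual, exist for the same reason. Concretely, pushouts are computed as $G\otimes_H X$ in $\Mon^\com_k$, as in the proof of Theorem \ref{thm:coregularity}.

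Combining these: $(\Hopf^\com_k)^\op$ is pointed, protomodular, exact and has binary coproducts, hence semi-abelian. I expect the only step needing genuine care to be the duality bookkeeping in the exactness part, namely matching kernels and normal subobjects in $C$ with cokernels and Hopf subalgebras in $\Hopf^\com_k$. Everything else is direct citation of the earlier results.
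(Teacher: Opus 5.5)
Your proposal is correct and follows the paper's proof, which likewise just assembles Proposition~\ref{prop:easyproperties} (pointed, locally presentable, coprotomodular) with Theorem~\ref{thm:exactness} (coexact). Your observation that local presentability gives binary products in $\Hopf^\com_k$, i.e.\ binary coproducts in the dual, is exactly the role local presentability plays in the paper's one-line argument.
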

\begin{proof}
    The category is pointed, locally presentable, coprotomodular (Proposition \ref{prop:easyproperties}), and coexact (Theorem \ref{thm:exactness}).
\end{proof}

\begin{remark}
    The coexactness of $\Hopf_A^\com$, where $A$ is a symmetric monoidal abelian category with a componentwise exact tensor product, is proven the same way as Theorem \ref{thm:exactness}, assuming similar characterizations of faithfully flat morphisms and Hopf-ideals. Consider the symmetric monoidal category $A$ of super $k$-vector spaces over a field $k$ of characteristic other than $2$. As these characterizations of faithfully flat morphisms and normal Hopf-ideals have been worked out in \cite[Corollary 5.5, Theorem 5.9] {Masuoka2005Fundamental} for the category $\Hopf_A^\com$, we attain that the category $\Hopf^\com_A$ of commutative super Hopf $k$-algebras is co-semi-abelian.
\end{remark}
In a future paper, we aim to extend the semi-abelianness result to affine group schemes over commutative von Neumann regular rings by applying a local-to-global principle.
\section*{Acknowledgements}
This research was funded by the Fonds de la Recherche Scientifique (Belgium) through an Aspirant fellowship. The author thanks Professor Van der Linden for suggesting this problem.

\printbibliography

\end{document}